\newcommand{\nz}{\mathbb{N}}       
\newcommand{\rz}{\mathbb{R}}       
\theoremstyle{plain}
	\newtheorem{SATZ}{Proposition}[section]   
	\newtheorem{THM}[SATZ]{Theorem}
	\theoremstyle{definition}
	\newtheorem{BEM}[SATZ]{Remark}
\numberwithin{figure}{section}
\numberwithin{table}{section}
\numberwithin{equation}{section}
\DeclareMathOperator{\Div}{div}
\DeclareMathOperator{\Curl}{curl}
\newcommand{\divT}{\Div_\CT}
\newcommand{\gradT}{\nabla_\CT}
\DeclareMathOperator{\goh}{O}
\DeclareMathOperator{\shps}{c}
\newcommand{\halb}{\frac 12}
\newcommand{\be}{\beta}
\newcommand{\la}{\lambda}
\newcommand{\om}{\omega}
\newcommand{\Om}{\Omega}
\newcommand{\vfi}{\varphi}
\newcommand{\pz}{\mathbb{P}}
\newcommand{\betrag}[2][]{\left\lvert #2 \right\rvert_{#1}} 
\newcommand{\norm}[2][]{\left\lVert #2 \right\rVert_{#1}} 
\newcommand{\jump}[2][E]{{\mathbb J}_{#1}(#2)}
\newcommand\Ve{\mathbf{e}}
\newcommand\Vf{\mathbf{f}}
\newcommand\Vn{\mathbf{n}}
\newcommand\Vt{\mathbf{t}}
\newcommand\Vv{\mathbf{v}}
\newcommand\Vu{\mathbf{u}}
\newcommand\Vw{\mathbf{w}}
\newcommand\CF{\mathcal{F}}
\newcommand\CM{\mathcal{M}}
\newcommand\CT{\mathcal{T}}
\newcommand\CU{\mathcal{U}}
\newcommand\CV{\mathcal{V}}
\title[Quasi-Optimal Crouzeix-Raviart Discretization of Stokes Equations]{A Quasi-Optimal Crouzeix-Raviart\\ Discretization of the Stokes Equations}
\author{R. Verf{\"u}rth}
\address{Ruhr-Uni\-ver\-si\-t{\"a}t Bo\-chum \\ Fa\-kul\-t{\"a}t f{\"u}r Ma\-the\-ma\-tik \\ D-44780 Bo\-chum \\ Germany}
\email{ruediger.verfuerth@rub.de}
\author{P. Zanotti}
\address{TU Dortmund \\ Fakult{\"a}t f{\"u}r Mathematik \\ D-44221 Dortmund \\ Germany}
\email{pietro.zanotti@tu-dortmund.de}
\date{\today}
\keywords{Crouzeix-Raviart element, Stokes equations, quasi-optimality, nonconforming finite elements, pressure-robustness}
\subjclass{65N30, 65N15, 65J10}
\dedicatory{Christine Bernardi in memoriam}
\begin{document}
\begin{abstract}
We present a modification of the Crouzeix-Raviart discretization of the Stokes equations in arbitrary dimension which is quasi-optimal, in the sense that the error of the discrete velocity field in a broken $H^1$-norm is proportional to the error of the best approximation to the analytical velocity field. In particular, the velocity error is independent of the pressure error and the discrete velocity field is element-wise solenoidal. Moreover, the sum of the velocity error times the viscosity plus the pressure $L^2$-error is proportional to the sum of the respective best errors. All proportionality constants are bounded in terms of shape regularity and do not depend on the viscosity. For simply connected two-dimensional domains, the velocity and pressure can be computed separately. The modification only affects the right-hand side aka load vector. The cost for building the modified load vector is proportional to the cost for building the standard load vector. Some numerical experiments illustrate our theoretical results.
\end{abstract}
\maketitle
%
%
%
%
\section{Introduction}\label{S:intro}
The Crouzeix-Raviart discretization \cite{CroRav} is a well-established nonconforming finite element method for the Stokes equations. It consists of piecewise constant pressures and piecewise affine velocity fields that are continuous at the barycentres of inter-element faces and that vanish at the barycentres of element faces on the boundary. This discretization is inf-sup stable on general simplicial meshes and provides a first-order approximation of both the analytical velocity and the analytical pressure in a rather straight-forward way. Other remarkable properties are that the discrete velocity field is element-wise solenoidal aka locally conservative and that, on simply connected two-dimensional domains, the velocity and pressure can be computed separately (cf. \cite{Bren14}, \cite[\S VI.8]{BreFor} and \S \ref{S:pract}). Still, one issue is that the nonconformity leads to a consistency error that can be bounded only under regularity assumptions on the analytical solution or the load functional, cf. \cite{VeeZan17a}. Moreover, the velocity error depends on the pressure error, as pointed out by Linke \cite{Lin}. 

Combining the approaches proposed in \cite{Lin} and \cite{VeeZan17b}, we construct a modified Crouzeix-Raviart discretization, which is quasi-optimal and pressure-robust. More precisely, the error of the discrete velocity field in a broken $H^1$-norm is proportional to the error of the best approximation to the analytical velocity field (cf. Theorem~\ref{T:main}). Moreover, the error of the discrete pressure in the $L^2$-norm is bounded, up to a constant, by the error of the best approximation to the analytical pressure plus the velocity error times the viscosity (cf. Theorem \ref{T:presserr}). All involved constants are independent of the viscosity and bounded in terms of the shape parameter of the underlying mesh. Thus, the error of the discrete velocity field is independent of the pressure, similarly to \cite{Lin} and unlike the standard discretization of Crouzeix and Raviart. Moreover, our estimates do not involve additional regularity, in contrast to the ones proved by Linke et al. \cite{Lin,LinMerNeiNeu18}.

The standard and the new discretization only differ in the right-hand side aka load vector. Consequently, the discrete velocity field is element-wise solenoidal also in the modified discretization and can be computed separately from the discrete pressure on simply connected two-dimensional domains. The additional cost for computing the modified load vector is proportional to the cost for computing the standard load vector.

The main idea to construct the modified load vector can be described as follows. We employ the \textit{smoothing} operator of \cite{VeeZan17b,VeeZan17c} to map Crouzeix-Raviart vector fields into piecewise polynomial, globally continuous vector fields before applying the analytical load functional. Since this operator does not guarantee pressure-robustness, we correct it by a locally computable stable right inverse of the divergence. More precisely, we solve a discrete Stokes problem with Scott-Vogelius elements \cite{GuzNei17,Qin,ScoVoga,ScoVogb,XuZha,Zha05} on a barycentric refinement of each mesh element (cf. \eqref{E:lSt} and \eqref{E:DivCorr}). Using the controvariant Piola transformation, it is actually sufficient to solve a fixed number of such local problems on a reference configuration (cf. \S \ref{S:pract}). The resulting operator has the additional property that element-wise solenoidal vector fields are mapped into exactly solenoidal vector fields. As already observed in \cite{Lin}, this is decisive to achieve pressure-robustness. 

The remainder of this article is organized as follows. In \S \ref{S:nonconf} we briefly recall the abstract setting and the main result of \cite{VeeZan17b}. In \S\S \ref{S:std} and \ref{S:mod} we then present the standard Crouzeix-Raviart discretization and our modification. In \S \ref{S:pract} we discuss the realization and the additional costs of our modification, as well as the possibility to decouple the computation of velocity and pressure. Finally, in \S\ref{S:numer} we illustrate and complement our abstract results by means of some numerical experiments in dimension two. 
%
%
\section{Quasi-optimal nonconforming methods}\label{S:nonconf}
For completeness and a better understanding, we outline the strategy of \cite{VeeZan17b} to design quasi-optimal nonconforming methods for symmetric elliptic problems, in a form adapted to our needs.

Consider a Hilbert space $V$ with scalar product $a(\cdot,\cdot)$. Given any continuous linear functional $\ell$ on $V$, we are looking for the unique function $u \in V$ such that
\begin{equation}\label{E:varpb}
a(u,v) = \ell(v)
\end{equation}
holds for all $v \in V$.

For the discretization of problem \eqref{E:varpb}, we consider a finite-dimensional space $V_\CT$. We assume that $a$ extends to a scalar product $a_\CT(\cdot,\cdot)$ on $V+V_\CT$ and denote by $\norm[\CT]{\cdot} = a_\CT(\cdot,\cdot)^\halb$ the induced energy norm. We also replace the load $\ell$ of \eqref{E:varpb} by a linear functional $\ell_\CT$ on $V_\CT$. Thus, we look for the unique function $u_\CT \in V_\CT$ such that
\begin{equation}\label{E:classnonconf}
a_\CT(u_\CT,v_\CT) = \ell_\CT(v_\CT)
\end{equation}
holds for all $v_\CT \in V_\CT$. We say that this is a (possibly) \textit{nonconforming} discretization of \eqref{E:varpb}, because $V_\CT$ is not required to be a subspace of $V$.

In standard nonconforming discretizations, like the one in \S\ref{S:std} below, the definition of $\ell_\CT$ often requires that $\ell$ in \eqref{E:varpb} has some extra-regularity. Hence, we cannot extend it to all continuous functionals on $V$. This generates a consistency error that cannot be bounded only in terms of the dual norm of $\ell$, cf. \cite[Remark~4.9]{VeeZan17a}. Consequently, the error $\norm[\CT]{u-u_\CT}$ is not proportional to the best approximation error $\inf_{v_\CT \in V_\CT} \norm[\CT]{u - v_\CT}$.

To overcome this drawback, Veeser and Zanotti suggest to consider a linear operator $E : V_\CT \longrightarrow V$ and look for the function $u_\CT \in V_\CT$ such that
\begin{equation}\label{E:quasioptnonconf}
a_\CT(u_\CT,v_\CT) = \ell(E v_\CT)
\end{equation}
holds for all $v_\CT \in V_\CT$. In this context, $E$ is also called \textit{smoothing} operator, as the nonconformity of $V_\CT$ often arises from a lack of smoothness. Notice that the discrete load $\ell_\CT$ in \eqref{E:quasioptnonconf} is well-defined for all continuous linear functionals on $V$. If $E$ also satisfies
\begin{equation}\label{E:consistency}
a_\CT(w_\CT,E v_\CT) = a_\CT(w_\CT,v_\CT) 
\end{equation}
for all $w_\CT,  v_\CT \in V_\CT$, then the solution of \eqref{E:quasioptnonconf} is quasi-optimal (in the norm $\norm[\CT]{\cdot}$), in the sense that
\begin{equation}\label{E:quasiopt}
\norm[\CT]{u - u_\CT} \le c_{\mathrm{qopt}} \inf_{v_\CT \in V_\CT} \norm[\CT]{u - v_\CT}.
\end{equation}
Furthermore, the best value of the constant $c_{\mathrm{qopt}}$ is the operator norm of $E$, see \cite[Corollary~2.7]{VeeZan17b}.

\begin{BEM}[Quasi-optimality and related notions]\label{R:qopt}
Quasi-optimality extends the well-known C\'{e}a's lemma and implies that the error $\norm[\CT]{u-u_\CT}$ of \eqref{E:quasioptnonconf} is equivalent to the best error $\inf_{v_\CT \in V_\CT} \norm[\CT]{u-v_\CT}$, in view of the inclusion $u_\CT \in V_\CT$, see e.g. \cite[Section~2.8]{BreSco} or \cite{VeeZan17a}. We mention that other authors call \textit{quasi-optimal} estimates in the form $\norm[\CT]{u-u_\CT} \leq c_{\mathrm{qopt}} \inf_{v_\CT \in V_\CT}\left( \norm[\CT]{u-v_\CT} + \mathrm{AG}_\CT(u-v_\CT) \right) $, where the norm $\norm[\CT]{\cdot}$ is augmented with $\mathrm{AG}_\CT(\cdot)$. The augmentation typically involves additional regularity of the solution $u$ or the load $\ell$ and is of higher-order, see  Carstensen et al. \cite{CarGalNat,CarSch} and Linke et al. \cite{LinMerNeiNeu18}. Such estimates are weaker than  \eqref{E:quasiopt}, in that the additional regularity required by $\mathrm{AG}_\CT(\cdot)$ obstructs a further bound of the right-hand side solely in terms of the best error $\inf_{v_\CT \in V_\CT} \norm[\CT]{u-v_\CT}$. 
\end{BEM}

%
%
\section{The standard Crouzeix-Raviart discretization}\label{S:std}
In what follows, $\Om$ is a connected bounded polyhedron in $\rz^d$, $d \geq 2$, with Lipschitz-continuous boundary. We denote by $\norm[]{\cdot}$ the $L^2$-norm on $\Om$. A subscript to $\norm[]{\cdot}$ indicates that we consider the $L^2$-norm only on the set specified by the subscript. Let $\CT$ be a shape-regular, face-to-face simplicial partition of $\Om$. We denote by $\CF$ and $\CV$ the faces and vertices, respectively, of the elements in $\CT$. A subscript to $\CF$ or $\CV$ indicates that we consider only those faces or vertices that are contained in the set specified by the subscript. We denote by $\betrag{K}$ and $\betrag{F}$ the $d$-dimensional Lebesgue measure of an element $K$ and the $(d - 1)$-dimensional Hausdorff measure of a face $F$, respectively. We write $h_K$ for the diameter of an element $K$. We associate with $\CT$ a so-called barycentric refinement $\CM$, which is obtained by connecting the vertices and the barycentre of every element $K \in\CT$, see \cite{GuzNei17}. $\CM_K$ stands for the restriction of $\CM$ to an element $K \in \CT$ and consists of $d + 1$ simplices. We denote by $c$ a generic nondecreasing function of the shape parameter of $\CT$. Such function does not need to be the same at different occurrences. 

For any integer $k \ge 0$, we denote by $\pz_k$ the space of polynomials of degree at most $k$ and set $S^{k,-1}(\CT) = \{ \vfi \in L^2(\Om): \forall K \in \CT \;\; \vfi_{|K} \in \pz_k \}$, where $\vfi_{|K}$ denotes the restriction of $\vfi$ to $K$. If $k \ge 1$ we set $S^{k,0}(\CT) = S^{k,-1}(\CT) \cap C(\Om) \subset H^1(\Om)$. The spaces $S^{k,-1}(\CM_K)$ and $S^{k,0}(\CM_K)$ are defined similarly with $\CT$ replaced by $\CM_K$. The (vector-valued) Crouzeix-Raviart space $CR(\CT)$ consists of all vector fields in $S^{1,-1}(\CT)^d$ that are continuous at the barycentres of interior faces and that vanish at the barycentres of boundary faces. Note that $CR(\CT) \not\subset H^1_0(\Om)^d$ due to the missing global continuity and the violation of the boundary condition. 

Denoting by $L^2_0(\Om)$ the space of all $L^2$-functions with mean value zero on $\Om$ and by $:$ and $\cdot$ the inner products of matrices and vectors respectively, the standard variational formulation of the Stokes equations with viscosity $\nu$ and load $\Vf \in L^2(\Om)^d$ consists in finding $\Vu \in H^1_0(\Om)^d$ and $p \in L^2_0(\Om)$ such that
\begin{equation}\label{E:Stokes}
\begin{aligned}
\nu \int_\Om \nabla \Vu : \nabla \Vv - \int_\Om p \Div \Vv &= \int_\Om \Vf \cdot \Vv &\forall \Vv \in H^1_0(\Om)^d \\
\int_\Om q \Div \Vu &= 0 &\forall q \in L^2_0(\Om).
\end{aligned}
\end{equation}
Similarly, denoting by $\gradT$ and $\divT$ the element-wise gradient and divergence, respectively, the standard Crouzeix-Raviart discretization of problem \eqref{E:Stokes} consists in finding $\Vu_\CT \in CR(\CT)$ and $p_\CT \in S^{0,-1}(\CT) \cap L^2_0(\Om)$ such that
\begin{equation}\label{E:CR}
\begin{split}
\nu \int_\Om \gradT \Vu_\CT : \gradT \Vv_\CT - \int_\Om p_\CT \divT \Vv_\CT &= \int_\Om \Vf \cdot \Vv_\CT \quad\quad\quad\,\forall \Vv_\CT \in CR(\CT) \\
\int_\Om q_\CT \divT \Vu_\CT &= 0 \quad\quad\forall q_\CT \in S^{0,-1}(\CT) \cap L^2_0(\Om).
\end{split}
\end{equation}

Problems \eqref{E:Stokes} and \eqref{E:CR} each admit a unique solution, see \cite[Example II.1.1, Example VI.3.10]{BreFor} or \cite{CroRav}. The discrete velocity field $\Vu_\CT$ is element-wise solenoidal, i.e. $\divT \Vu_\CT = 0$. If the analytical velocity field $\Vu$ is in $H^2(\Om)^d$ and the analytical pressure $p$ is in $H^1(\Om)$, then the error of the velocity measured in the broken $H^1$-norm $\norm[]{\nabla_\CT\cdot}$ and the error of the pressure measured in the $L^2$-norm decay linearly with respect to the mesh-size \cite{Bren14,CroRav}. In proving this estimate, one has to cope with the consistency error of the discretization due to the missing global continuity of the discrete velocity fields. Since, contrary to problem \eqref{E:Stokes}, problem \eqref{E:CR} is not defined for general $\Vf \in H^{-1}(\Om)^d$, it is not fully stable, meaning that its consistency error cannot be bounded in terms of the $H^{-1}$-norm of $\Vf$, cf. \cite{VeeZan17a} and Remark~\ref{R:no-smoothing} below. Moreover, the fact that element-wise solenoidal discrete velocities are in general not exactly solenoidal entails that \eqref{E:CR} is not pressure-robust, i.e. the velocity $H^1$-error depends on the pressure $L^2$-error, see \cite{Lin}. Both issues are addressed in the next section.

Problems \eqref{E:Stokes} and \eqref{E:CR} do not fit into the framework of \S \ref{S:nonconf}, since they are in saddle-point form. Yet, testing the first equation of \eqref{E:Stokes} with divergence-free functions, we obtain a reduced problem for the analytical velocity field, which fits into \eqref{E:varpb} with space, scalar product and load functional 
\begin{equation}\label{E:setting-Stokes}
\begin{gathered}
V = \{ \Vu \in H^1_0(\Om)^d : \Div \Vu = 0 \}\\
a(\Vu,\Vv) = \nu \int_\Om \nabla \Vu : \nabla \Vv
\qquad \text{and} \qquad
\ell(\Vv) = \int_\Om \Vf \cdot \Vv.
\end{gathered}
\end{equation}
Similarly, one can reduce \eqref{E:CR} to a problem for the discrete velocity field alone, which fits into \eqref{E:classnonconf} with 
\begin{equation}\label{E:setting-CR}
\begin{gathered}
V_\CT = \{ \Vu_\CT \in CR(\CT) : \divT \Vu_\CT = 0 \}\\
a_\CT(\Vu_\CT,\Vv_\CT) = \nu \int_\Om \gradT \Vu_\CT : \gradT \Vv_\CT
\qquad \text{and} \qquad
\ell_\CT(\Vv_\CT) = \int_\Om \Vf \cdot \Vv_\CT.
\end{gathered}
\end{equation}
Notice that $V_\CT \not\subset V$ and that the functional $\ell$ can be extended to $\Vf \in H^{-1}(\Om)^d$ but $\ell_\CT$ not. The stiffness matrix of problem \eqref{E:classnonconf} with \eqref{E:setting-CR} is symmetric positive definite but its condition number grows like $\goh(h^{-4})$ if $\CT$ is quasi-uniform. If $\Om$ is a simply connected two-dimensional domain, there is a basis of $V_\CT$ consisting of locally supported vector fields and the discrete pressure can be computed from the discrete velocity by sweeping through elements (cf. \S \ref{S:pract} and Algorithm \ref{A:presscomp}).
%
%
\section{The modified Crouzeix-Raviart discretization}\label{S:mod}
We now propose a modified version of \eqref{E:CR} and prove our main results.

\subsection{Construction of the discretization}
Motivated by the abstract framework of \S\ref{S:nonconf} and by the discussion in \S\ref{S:std}, we modify the standard Crouzeix-Raviart discretization \eqref{E:CR} as follows:  Find $\Vu_\CT \in CR(\CT)$ and $p_\CT \in S^{0,-1}(\CT) \cap L^2_0(\Om)$ such that
\begin{equation}\label{E:CRmod}
\begin{split}
\nu \int_\Om \gradT \Vu_\CT : \gradT \Vv_\CT - \int_\Om p_\CT \divT \Vv_\CT &= \int_\Om \Vf \cdot E \Vv_\CT \quad\quad\;\forall \Vv_\CT \in CR(\CT) \\
\int_\Om q_\CT \divT \Vu_\CT &= 0 \quad\quad\forall q_\CT \in S^{0,-1}(\CT) \cap L^2_0(\Om)
\end{split}
\end{equation}
where $E : CR(\CT) \longrightarrow H^1_0(\Om)^d$ is a linear operator (i.e. a smoothing operator, in the terminology of \S\ref{S:nonconf}). 

Since \eqref{E:CR} and \eqref{E:CRmod} only differ in the right-hand side, \cite[Example VI.3.10]{BreFor} or \cite{CroRav} again imply that problem \eqref{E:CRmod} admits a unique solution. In particular, the discrete velocity field $\Vu_\CT$ is element-wise solenoidal, showing that $\Vu_\CT \in V_\CT$. Testing the first equation with functions from $V_\CT$, we derive a reduced problem for $\Vu_\CT$, that fits into \eqref{E:classnonconf} and \eqref{E:quasioptnonconf} with 
\begin{equation}\label{E:setting-CR-mod}
\begin{gathered}
V_\CT = \{ \Vu_\CT \in CR(\CT) : \divT \Vu_\CT = 0 \}\\
a_\CT(\Vu_\CT,\Vv_\CT) = \nu \int_\Om \gradT \Vu_\CT : \gradT \Vv_\CT
\qquad \text{and} \qquad
\ell_\CT(\Vv_\CT) = \int_\Om \Vf \cdot E \Vv_\CT.
\end{gathered}
\end{equation}
In contrast to \eqref{E:setting-CR}, here $\ell_\CT$ can be extended to $\Vf \in H^{-1}(\Om)^d$.

We aim at constructing the operator $E$ so that the error of the discrete velocity field in the broken $H^1$-norm is proportional to the best approximation error to the analytical velocity. For this purpose, we preliminarily observe that \eqref{E:setting-CR-mod} is a nonconforming discretization of \eqref{E:setting-Stokes}. Hence, the results recalled in \S\ref{S:nonconf} indicate that $E$ should map $V_\CT$ into $V$ and satisfy \eqref{E:consistency}. Such conditions are sufficient to achieve \eqref{E:quasiopt}, with the best constant $c_\mathrm{qopt}$ given by the operator norm of $E$. For this reason, we also require that $E$ is $H^1$-stable, i.e. $\norm[]{\nabla E \Vv_\CT} \leq c \norm[]{\nabla_\CT \Vv_\CT}$ for all $\Vv_\CT \in CR(\CT)$.

Since the midpoint-rule is exact for affine functions, we have, for all $F \in \CF_\Om$ and $\Vv_\CT \in CR(\CT)$,
\begin{equation*}
\int_F \Vv_{\CT|K_{F1}} = \betrag{F} \Vv_{\CT|K_{F1}}(m_F) = \betrag{F} \Vv_{\CT|K_{F2}}(m_F) = \int_F \Vv_{\CT|K_{F2}},
\end{equation*}
where $K_{F1}$ and $K_{F2}$ are the two elements sharing the face $F$ and $m_F$ is its barycentre. The same observation reveals $\int_F \Vv_\CT = 0$ for $F \in \CF_{\partial \Om}$. Therefore, the integral $\int_F \Vv_\CT$ is defined without ambiguity for all faces and vanishes for boundary faces. Integrating by parts element-wise, one can easily check that, for all $\Vw_\CT, \Vv_\CT \in  CR(\CT)$,
\begin{equation*}
\int_\Om \gradT \Vw_\CT \colon \gradT \Vv_\CT
=
\sum_{K \in \CT} \sum_{F \in \CF_K}
\int_F \nabla (\Vw_\CT)_{|K} \Vn_K \cdot \Vv_\CT
\end{equation*}
and
\begin{equation*}
\int_\Om \gradT \Vw_\CT \colon \gradT E \Vv_\CT
=
\sum_{K \in \CT} \sum_{F \in \CF_K}
\int_F \nabla( \Vw_\CT)_{|K} \Vn_K \cdot E \Vv_\CT
\end{equation*}
where $\Vn_K$ is the outward unit normal vector of $K$. Thus, since $\gradT \Vw_\CT$ is element-wise constant, a sufficient condition for \eqref{E:consistency} is 
\begin{equation}\label{E:CRconsistency}
\int_F E \Vv_\CT = \int_F \Vv_\CT 
\end{equation}
for all $ F \in \CF$ and $\Vv_\CT \in CR(\CT)$. This identity always holds on faces $F \in \CF_{\partial \Om}$, because of the homogeneous boundary conditions in $H^1_0(\Om)^d$ and $CR(\CT)$.

We first construct a vector version of the smoothing operator in \cite[\S3.2]{VeeZan17b}. For every interior vertex $z \in \CV_\Om$, we denote by $\la_z \in S^{1,0}(\CT)$ the conforming first-order nodal basis function associated with the evaluation at $z$. We also choose an element $K_z \in \CT$ such that $z \in K_z$ and keep it fixed in what follows. With this notation, we define a \emph{simplified averaging} operator $A : CR(\CT) \longrightarrow S^{1,0}(\CT)^d \cap H^1_0(\Om)^d$ by
\begin{equation*}
A \Vv_\CT = \sum_{z \in \CV_\Om} \la_z \,\Vv_{\CT|K_z}(z).
\end{equation*}
Note that $A\Vv_\CT$ is set to zero at the vertices on $\partial \Om$.

Next, we associate a face-bubble $\psi_F \in S^{d,0}(\CT)$ with every interior face $F \in \CF_\Om$ 
\begin{equation*}
\psi_F = \frac{(2d - 1)!}{(d - 1)! \betrag{F}} \prod_{z \in \CV_F} \la_z.
\end{equation*}
The function $\psi_F$ is normalized so that $\int_{F^\prime} \psi_F = \delta_{FF^\prime}$ for all $F^\prime \in \CF$. Then, we define a \emph{bubble} operator $B : CR(\CT) \longrightarrow S^{d,0}(\CT)^d \cap H^1_0(\Om)^d$ by
\begin{equation*}
B \Vv_\CT = \sum_{F \in \CF_\Om} \psi_F \int_F \Vv_\CT.
\end{equation*}

The normalization of the bubble functions $\psi_F$ guarantees that $B$ satisfies \eqref{E:CRconsistency}. Yet, this operator is not $H^1$-stable, cf. \cite[Remark~3.5]{VeeZan17b}. Thus, we combine $A$ and $B$ and define a smoothing operator $C : CR(\CT)\longrightarrow S^{d,0}(\CT)^d \cap H^1_0(\Om)^d$ by 
\begin{equation*}
C \Vv_\CT = A \Vv_\CT + B ( \Vv_\CT - A \Vv_\CT).
\end{equation*}
According to \cite[Proposition~3.3]{VeeZan17b}, this operator is $H^1$-stable. Moreover, rearranging terms and exploiting the definition of $B$, we see that 
\begin{equation}\label{E:Cconsistency}
\int_F C \Vv_\CT 
= 
\int_F B\Vv_\CT 
+
\underbrace{\int_F (A\Vv_\CT - BA \Vv_\CT)}_{= 0}
=
\int_F \Vv_\CT 
\end{equation} 
for all $F \in \CF$ and $\Vv_\CT \in CR(\CT)$. This confirms that $C$ satisfies \eqref{E:CRconsistency}. 

Due to the Gauss theorem, identity \eqref{E:Cconsistency} entails, in particular,
\begin{equation}\label{E:w-conserv}
\int_K \Div (C \Vv_\CT) 
= 
\int_K \divT \Vv_\CT
\end{equation}
for all $K \in \CT$ and $\Vv_\CT \in CR(\CT)$. Unfortunately, this does not imply that $C$ maps $V_\CT$ into $V$, because $\Div (C\Vv_\CT) \in S^{d-1,-1}(\CT)$, cf. \cite[Remark~3.14]{VeeZan17c}. Recall the barycentric refinement $\CM$ of $\CT$. The crucial point of our modification of the Crouzeix-Raviart discretization is the construction of another $H^1$-stable smoothing operator $E : CR(\CT) \longrightarrow S^{d,0}(\CM)^d \cap H^1_0(\Om)^d$, correcting $C$, which preserves the validity of \eqref{E:Cconsistency} and additionally satisfies
\begin{equation}\label{E:s-conserv}
\Div (E \Vv_\CT)
=
\divT \Vv_\CT
\end{equation}
for all $\Vv_\CT \in CR(\CT)$. Indeed, this condition entails that $E$ maps $V_\CT$ into $V$. The desired correction is achieved by solving on every element $K \in \CT$ a discrete Stokes problem, based on stable Scott-Vogelius elements on $\CM_K$, with homogeneous momentum equation and suitable inhomogeneous continuity equation involving $\Div (C \Vv_\CT)$.

To make things precise, we consider, for every element $K \in \CT$ and every function $r \in L^2_0(K)$, the following discrete Stokes problem with unit viscosity, which consists in finding $\Vu_K \in S^{d,0}(\CM_K)^d \cap H^1_0(K)^d$ and $p_K \in S^{d-1,-1}(\CM_K) \cap L^2_0(K)$ such that
\begin{equation}\label{E:lSt}
\begin{aligned}
\int_K \nabla \Vu_K: \nabla \Vv - \int_K p_K \Div \Vv &= 0 &\forall \Vv \in S^{d,0}(\CM_K)^d \cap H^1_0(K)^d \\
\int_K q \Div \Vu_K &= \int_K r q &\forall q \in S^{d-1,-1}(\CM_K) \cap L^2_0(K).
\end{aligned}
\end{equation}

Denote by $S_K$ the mapping $r \mapsto \Vu_K$. As a consequence of \cite[Theorem 3.1]{GuzNei17}, problem \eqref{E:lSt} is uniquely solvable (entailing that $S_K$ is well-defined) and we have
\begin{equation}
\label{E:SK-stable}
\norm[K]{\nabla S_K r}
\leq
c
\norm[K]{r}.
\end{equation}
Assuming $r \in S^{d-1, -1}(\CM_K) \cap L^2_0(K)$ and extending both $r$ and $S_K r $ to zero outside $K$, we infer also
\begin{equation}
\label{E:SK-consistent}
(S_K r)_{|F} = 0
\qquad \text{and} \qquad
\Div(S_K r) = r
\end{equation}
for all $F \in \CF$. In particular, the latter property entails that the sum $\sum_{K \in \CT} S_K$ is a stable global right inverse of the divergence, which is defined on a subspace of $S^{d-1, -1}(\CM) \cap L^2_0(\Om)$ and can be computed locally.

In view of \eqref{E:w-conserv}, the restriction of $\Div (C\Vv_\CT) - \divT(\Vv_\CT)$ to each element $K \in \CT$ is in $S^{d-1,-1}(\CM_K) \cap L^2_0(K)$. Therefore, we define the announced smoothing operator $E : CR(\CT) \longrightarrow S^{d,0}(\CM)^d \cap H^1_0(\Om)^d$ by
\begin{equation}\label{E:DivCorr}
E \Vv_\CT = C \Vv_\CT - \sum_{K \in \CT} S_K \left( \Div (C \Vv_\CT) - \divT \Vv_\CT \right)
\end{equation}
for every $\Vv_\CT \in CR(\CT)$. The previously observed $H^1$-stability of $C$ and \eqref{E:SK-stable} ensure that $E$ is $H^1$-stable and its norm is bounded only in terms of the shape parameter of $\CT$. Furthermore, condition \eqref{E:CRconsistency} is fulfilled thanks to \eqref{E:Cconsistency} and the first part of \eqref{E:SK-consistent}, while \eqref{E:s-conserv} follows from the second part of \eqref{E:SK-consistent}.

\begin{BEM}[Divergence-free pairs]\label{R:DivFree}
The Scott-Vogelius pair employed in \eqref{E:lSt} is divergence-free, in the sense that the divergence maps the discrete velocity space onto the discrete pressure space. This property is needed to compute the local right inverses $S_K$ of the divergence, which are then used to correct the operator $C$. An alternative construction, based on a different divergence-free pair, can be found in \cite{LinMerNeiNeu18}. The analysis of the Scott-Vogelius pair was initiated in \cite{ScoVoga,ScoVogb}, where it was pointed out that stability can be obtained only for certain combinations of the mesh geometry and the polynomial degree $k$. The stability on the barycentric refinement of a simplicial mesh was proved by Qin \cite{Qin} in 2D for $k \geq 2$ and by Zhang \cite{Zha05} in 3D for $k \geq 3$. The recent paper \cite{GuzNei17} by Guzm\'{a}n and Neilan generalizes these results in $\rz^d$ for $k \geq d$.  
\end{BEM}

\subsection{Error estimates}\label{SS:velocity-error}
Consider the modified Crouzeix-Raviart discretization \eqref{E:CRmod} with the operator $E$ from \eqref{E:DivCorr}. The above-mentioned properties of $E$ imply our first main result.

\begin{THM}[Quasi-optimal velocity error]\label{T:main}
Denote by $(\Vu, p)$ and $(\Vu_\CT, p_\CT)$ the unique solutions of problems \eqref{E:Stokes} and \eqref{E:CRmod}, respectively. There is a constant $\shps_1$, which only depends on the shape parameter of $\CT$ and not on the viscosity $\nu$, such that
\begin{equation}\label{velocity-error-estimate}
\norm{\nabla \Vu - \gradT \Vu_\CT} \le \shps_1 \inf_{\Vv_\CT \in V_\CT} \norm{\nabla \Vu - \gradT \Vv_\CT} = \shps_1 \inf_{\Vv_\CT \in CR(\CT)}\norm{\nabla \Vu - \gradT \Vv_\CT}.
\end{equation}
\end{THM}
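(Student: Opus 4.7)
The plan is to reduce the saddle-point problem \eqref{E:CRmod} to the velocity-only formulation \eqref{E:classnonconf} with the data \eqref{E:setting-CR-mod}, and then invoke the abstract quasi-optimality \eqref{E:quasiopt} from \S\ref{S:nonconf}. All ingredients are already assembled in \S\ref{S:mod}: the operator $E$ from \eqref{E:DivCorr} is $H^1$-stable with stability constant depending only on shape regularity, satisfies the face-average identity \eqref{E:CRconsistency}, and by \eqref{E:s-conserv} maps $V_\CT$ into $V$. Together with the element-wise integration-by-parts identity displayed just before \eqref{E:CRconsistency} and the observation that $\nabla \Vw_{\CT|K}\Vn_K$ is constant on every face $F \in \CF_K$, the identity \eqref{E:CRconsistency} implies the abstract consistency \eqref{E:consistency} for the reduced bilinear form in \eqref{E:setting-CR-mod}. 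An application of \eqref{E:quasiopt} then yields the first inequality in \eqref{velocity-error-estimate} with $\shps_1$ equal to the operator norm of $E$, which is controlled solely by the shape parameter of $\CT$.

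To obtain the equality of the two infima, I would exhibit a concrete element of $V_\CT$ that realizes the best approximation already in $CR(\CT)$. The natural candidate is the canonical Crouzeix-Raviart interpolant $I_\CT \Vu \in CR(\CT)$, uniquely defined by $\int_F I_\CT \Vu = \int_F \Vu$ for every $F \in \CF$. An element-wise integration by parts, combined once more with the fact that $\nabla \Vw_{\CT|K}\Vn_K$ is constant on each face for $\Vw_\CT \in CR(\CT)$, gives the Galerkin orthogonality
\begin{equation*}
\int_\Om \gradT(\Vu - I_\CT \Vu) : \gradT \Vw_\CT
=
\sum_{K \in \CT} \sum_{F \in \CF_K} (\nabla \Vw_{\CT|K} \Vn_K) \cdot \int_F (\Vu - I_\CT \Vu)
= 0
\end{equation*}
for all $\Vw_\CT \in CR(\CT)$, so that $I_\CT \Vu$ is the $\gradT$-best approximation of $\Vu$ in the full space $CR(\CT)$. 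Moreover, since $\Div \Vu = 0$ and $\divT (I_\CT \Vu)$ is piecewise constant, the divergence theorem yields $\int_K \divT(I_\CT \Vu) = \int_{\partial K} \Vu \cdot \Vn_K = \int_K \Div \Vu = 0$ on each $K \in \CT$, which forces $I_\CT \Vu \in V_\CT$. Hence the infimum over $CR(\CT)$ is already attained inside $V_\CT$, giving the claimed equality.

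The principal conceptual point---and the only non-routine step---is to recognize that \eqref{E:CRmod} fits the abstract framework of \S\ref{S:nonconf} only after reducing to divergence-free subspaces, so that the two geometric conditions \eqref{E:CRconsistency} and \eqref{E:s-conserv} on $E$ correspond exactly to the abstract hypotheses $E(V_\CT) \subset V$ and \eqref{E:consistency}. The remaining ingredients---the $H^1$-stability and the two properties of $E$---have already been established in \S\ref{S:mod}, while the equality of infima relies on the classical orthogonality property of the Crouzeix-Raviart interpolant recalled above. No further estimates are required.
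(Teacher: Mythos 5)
Your proposal is correct and follows essentially the same route as the paper: reduce \eqref{E:CRmod} to the velocity-only problem \eqref{E:quasioptnonconf} with \eqref{E:setting-CR-mod}, invoke the abstract quasi-optimality \eqref{E:quasiopt} using the $H^1$-stability of $E$, \eqref{E:CRconsistency} and \eqref{E:s-conserv}, and then identify the two infima via the Crouzeix-Raviart interpolant $I_\CT\Vu$, shown to lie in $V_\CT$ by the divergence theorem. The only cosmetic difference is that you justify the optimality of $I_\CT\Vu$ in $CR(\CT)$ by Galerkin orthogonality against $\gradT CR(\CT)$, whereas the paper uses the equivalent observation that $\int_K \gradT(I_\CT\Vu)=\int_K\nabla\Vu$; both follow from the same element-wise integration by parts.
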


\begin{proof}
The first inequality follows from identities \eqref{E:CRconsistency} and \eqref{E:s-conserv} and the $H^1$-stability of $E$, in view of the abstract discussion in section 2 or \cite[Corollary~2.7]{VeeZan17b}. The identity of the best errors can be checked with the help of the so-called Crouzeix-Raviart quasi-interpolation operator $I_\CT: H^1_0(\Om)^d \to CR(\CT)$, which is defined by $\int_F I_\CT \Vu = \int_F \Vu$ for all faces $F \in \CF$. Integrating by parts element-wise, we see that $\int_K \gradT (I_\CT \Vu) = \int_K \nabla \Vu$ and $\int_K \divT (I_\CT \Vu) = \int_K \Div \Vu$ for all $K \in \CT$. The latter identity entails $I_\CT \Vu \in V_\CT$, because $\Vu \in V$ and $\divT(I_\CT \Vu)$ is element-wise constant. Combining this fact with the first identity implies $\inf_{\Vv_\CT \in V_\CT} \norm{\nabla \Vu - \gradT \Vv_\CT} = \norm{\nabla \Vu - \gradT (I_\CT\Vu)} = \inf_{\Vv_\CT \in CR(\CT)}\norm{\nabla \Vu - \gradT \Vv_\CT}$.
\end{proof}

Theorem~\ref{T:main} combines the advantages of \cite{Lin,LinMerNeiNeu18} and \cite{VeeZan17b}, because the velocity $H^1$-error is independent of the pressure and proportional to the error of the best approximation to the analytical velocity. This implies that the velocity error of the standard discretization \eqref{E:CR} can be smaller, but not arbitrarily smaller, than the error of the modified one. The constant $c_1$ quantifies the maximum gap, which is uniformly bounded for shape regular meshes. Conversely, the next remark shows that \eqref{E:CRmod} can significantly outperform \eqref{E:CR} for certain loads. 

\begin{BEM}[Nonconforming discretizations without smoothing] \label{R:no-smoothing}
The error estimate of Theorem~\ref{T:main}, combined with the triangle inequality, reveals that the solution $\Vu_\CT$ of \eqref{E:CRmod} depends continuously on the analytical velocity $\Vu$ in the broken $H^1$-norm. This property hinges on the use of the smoothing operator $E$, which maps the Crouzeix-Raviart test functions into $H^1_0(\Om)^d$, before the application of the load functional. In particular, $\Vu_\CT$ is well-defined for all loads $\Vf\in H^{-1}(\Om)^d$. In contrast, the standard Crouzeix-Raviart discretization \eqref{E:CR} is only defined under the regularity assumption 
\begin{equation*}
\norm[CR^*]{\Vf} 
:= 
\sup_{\Vv_\CT \in CR(\CT) \setminus \{0\}}
\dfrac{\int_\Om \Vf \cdot \Vv_\CT}{\norm[]{\gradT \Vv_\CT} }
< \infty.
\end{equation*} 	
Since $\norm[CR^*]{\cdot}$ is not equivalent to the $H^{-1}$-norm, the discrete velocity $\Vu_\CT$ of  \eqref{E:CR} does not depend continuously on $\Vu$ in the broken $H^1$-norm. Therefore, the velocity $H^1$-error of the standard discretization can be arbitrarily larger than the best error $\inf_{\Vv_\CT \in CR(\CT)} \norm[]{\gradT(\Vu-\Vv_\CT)}$, provided $\norm[CR^*]{\Vf}$ is sufficiently larger than the $H^{-1}$-norm of $\Vf$. This observation is not specific to \eqref{E:CR} and concerns any other nonconforming discretization without smoothing, including the ones in \cite{Lin,LinMerNeiNeu18}.   
\end{BEM}

The following remarks shed additional light on the error estimate of Theorem~\ref{T:main}, in connection with the results available for other existing discretizations of the Stokes equations.

\begin{BEM}[Quasi-optimality and divergence-free pairs]\label{R:qopt-DivFree}
The quasi-optimal estimate of Theorem~\ref{T:main} distinguishes \eqref{E:CRmod} from nonconforming methods without smoothing but also from several other conforming finite element methods for \eqref{E:Stokes}. Indeed, to our best knowledge, estimates in this form have been previously obtained only with conforming and divergence-free pairs, like the one of Scott and Vogelius, see Remark~\ref{R:DivFree}. Moreover, if we restrict the attention to first-order discretizations, only few such pairs are known to be stable, either requiring special mesh geometries or the use of non-polynomial basis functions, cf. \cite{GuzNei14,GuzNei17,Zha08}. Theorem~\ref{T:main} shows that quasi-optimality is actually not restricted to conforming and divergence-free pairs but can be achieved by means of a proper discretization of the load functional.  
\end{BEM}

\begin{BEM}[Shape regularity and anisotropic meshes]\label{R:shape-regularity}
Acosta and Dur\'{a}n considered in \cite{AcoDur} possibly anisotropic meshes, fulfilling maximum angle conditions, in dimension two and three. For certain smooth solutions of \eqref{E:Stokes}, they proved that the velocity error of \eqref{E:CR} in the broken $H^1$-norm converges with maximum decay rate. A counterpart of this result cannot be derived for \eqref{E:CRmod} by Theorem~\ref{T:main}. In fact, the constant $\shps_1$ in \eqref{velocity-error-estimate} depends on the shape parameter of $\CT$ and it does not seem possible to avoid such dependence if the operator $E$ is defined as in \eqref{E:DivCorr}, cf. \S\ref{SS:smooth-sol-anis}. Still, even in case $\shps_1$ is indeed large, it is not obvious that \eqref{E:CR} always outperforms \eqref{E:CRmod}. This is illustrated by a numerical experiment in \S\ref{S:numer}.
\end{BEM}

The abstract framework of \S\ref{S:nonconf} cannot be used to bound the pressure $L^2$-error. For this purpose, we combine the previous bound of the velocity error, the properties of $E$ and standard techniques for saddle point problems. We first observe that integration by parts element-wise, together with \eqref{E:CRconsistency}, the first equations of \eqref{E:Stokes} and \eqref{E:CRmod} and the fact that $E$ maps into $H^1_0(\Om)^d$ imply
\begin{equation*}
\begin{split}
\nu \int_\Om \gradT \Vu_\CT : \nabla (E \Vv_\CT) - \int_\Om p_\CT \Div (E \Vv_\CT) &= \nu \int_\Om \gradT \Vu_\CT : \gradT \Vv_\CT - \int_\Om p_\CT \divT \Vv_\CT \\
&= \int_\Om \Vf \cdot E \Vv_\CT \\
&= \nu \int_\Om \nabla \Vu : \nabla (E \Vv_\CT) - \int_\Om p \Div (E \Vv_\CT)
\end{split}
\end{equation*}
for all $\Vv_\CT \in CR(\CT)$. Thus, we derive the identity
\begin{equation}\label{E:perr}
\int_\Om (p - p_\CT) \Div (E \Vv_\CT) = \nu \int_\Om (\nabla \Vu - \gradT \Vu_\CT) : \nabla (E \Vv_\CT).
\end{equation}
Next, we compare $p_\CT$ with the $L^2$-projection $\pi_\CT p \in S^{0,-1}(\CT)$ of $p$
\begin{equation}\label{E:tri}
\norm{p - p_\CT} \le \norm{p - \pi_\CT p} + \norm{\pi_\CT p - p_\CT} 
\end{equation}
and use \eqref{E:CRconsistency} and the Gauss theorem to obtain
\begin{equation}\label{E:pdiffE}
\int_\Om (\pi_\CT p - p_\CT) \divT \Vv_\CT = \int_\Om (\pi_\CT p - p_\CT) \Div (E \Vv_\CT) 
\end{equation}
for all $\Vv_\CT \in CR(\CT)$. Finally, the uniform stability of the Crouzeix-Raviart element \cite[Example VI.3.10]{BreFor}, \cite{CroRav} implies
\begin{equation}\label{E:pstab}
\be \norm{\pi_\CT p - p_\CT} \le \sup_{\Vv_\CT \in CR(\CT) \setminus \{ 0 \}} \frac{\int_\Om (\pi_\CT p - p_\CT) \divT \Vv_\CT}{\norm{\gradT \Vv_\CT}},
\end{equation}
where $\be = \inf_{q \in L^2_0(\Om) \setminus \{ 0 \}} \sup_{\Vv \in H^1_0(\Om)^d \setminus \{ 0 \}} \frac{\int_\Om q \Div \Vv}{\norm{q} \norm{\nabla \Vv}}$ is the inf-sup constant of the divergence operator. Recall that $\be$ does not depend on the viscosity $\nu$ and $\beta^{-1}$ is bounded in terms of the ratio $\mathrm{diam}(\Om) /R$, provided $\Om$ is star-shaped with respect to a ball of radius $R$, cf. \cite{Bog}. Consequently, the estimate in Theorem~\ref{T:presserr} below depends on this ratio, while the one in Theorem~\ref{T:main} is independent of it. 

Combining \eqref{E:tri}, \eqref{E:pstab}, \eqref{E:pdiffE}, \eqref{E:perr}, the $H^1$-stability of $E$ and Theorem~\ref{T:main} proves our second main result.

\begin{THM}[Pressure error]\label{T:presserr}
Denote by $(\Vu, p)$ and $(\Vu_\CT, p_\CT)$ the unique solutions of problems \eqref{E:Stokes} and \eqref{E:CRmod}, respectively, and let $c_1$ be as in Theorem~\ref{T:main}. There are two constants $\shps_2$ and $\shps_3$, which only depend on the shape parameter of $\CT$ and the inf-sup constant $\be$ of the divergence operator but not on the viscosity $\nu$, such that
\begin{equation*}
\begin{split}
\norm{p - p_\CT} 
\le \shps_2 \inf_{q_\CT \in S^{0,-1}(\CT)}\norm{p - q_\CT} + \shps_1 \shps_3 \nu \inf_{\Vv_\CT \in CR(\CT)}\norm{\nabla \Vu - \gradT \Vv_\CT}.
\end{split}
\end{equation*}
\end{THM}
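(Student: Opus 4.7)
The plan follows exactly the path sketched just before the statement: combine \eqref{E:tri}, \eqref{E:pstab}, \eqref{E:pdiffE}, \eqref{E:perr}, the $H^1$-stability of $E$, and Theorem~\ref{T:main}. The architecture is to bound $\norm{p - p_\CT}$ by the triangle inequality splitting through the $L^2$-projection $\pi_\CT p$, and then to control the discrete remainder $\norm{\pi_\CT p - p_\CT}$ through the inf-sup inequality \eqref{E:pstab}.

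The first summand $\norm{p - \pi_\CT p}$ produced by \eqref{E:tri} already equals the best approximation $\inf_{q_\CT \in S^{0,-1}(\CT)}\norm{p - q_\CT}$, and accounts for the first term on the right-hand side of the theorem. For the second summand I would start from \eqref{E:pstab} and substitute \eqref{E:pdiffE} in the numerator to obtain
\begin{equation*}
\be \norm{\pi_\CT p - p_\CT} \le \sup_{\Vv_\CT \in CR(\CT)\setminus\{0\}} \frac{\int_\Om (\pi_\CT p - p_\CT) \Div(E \Vv_\CT)}{\norm{\gradT \Vv_\CT}}.
\end{equation*}
Writing $\pi_\CT p - p_\CT = (\pi_\CT p - p) + (p - p_\CT)$ in the numerator, I would handle the two pieces separately. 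The first piece is bounded by Cauchy--Schwarz together with $\norm{\Div(E \Vv_\CT)} \le \sqrt{d}\,\norm{\nabla E \Vv_\CT} \le c \norm{\gradT \Vv_\CT}$, i.e.\ the $H^1$-stability of $E$. The second piece is rewritten by \eqref{E:perr} as $\nu \int_\Om (\nabla \Vu - \gradT \Vu_\CT) : \nabla (E \Vv_\CT)$ and estimated again by Cauchy--Schwarz and $H^1$-stability.

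After dividing by $\norm{\gradT \Vv_\CT}$, passing to the supremum, and multiplying by $\be^{-1}$, this yields $\norm{\pi_\CT p - p_\CT} \le c \be^{-1}\bigl(\norm{p - \pi_\CT p} + \nu \norm{\nabla \Vu - \gradT \Vu_\CT}\bigr)$. Substituting back into \eqref{E:tri} and invoking Theorem~\ref{T:main} to replace the velocity error by $\shps_1 \inf_{\Vv_\CT \in CR(\CT)} \norm{\nabla \Vu - \gradT \Vv_\CT}$ produces the statement with $\shps_2 = 1 + c \be^{-1}$ and $\shps_3 = c \be^{-1}$, both depending only on shape regularity and $\be$ and not on $\nu$. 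No serious obstacle is expected; the delicate point is merely to keep $\nu$ attached to the velocity error (rather than inverted) via \eqref{E:perr}, so that the bound remains robust as $\nu \to 0$, and to ensure that $\be^{-1}$ enters only through $\shps_2, \shps_3$ and never through $\shps_1$. A further simplification, giving the cleaner value $\shps_2 = 1$, is available by noting that \eqref{E:s-conserv} forces $\Div(E \Vv_\CT) = \divT \Vv_\CT \in S^{0,-1}(\CT)$, so that $L^2$-orthogonality of $\pi_\CT$ kills the $(\pi_\CT p - p)$ piece outright; this is worth flagging but not needed for the theorem as stated.
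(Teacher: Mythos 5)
Your proposal is correct and follows exactly the paper's intended argument: triangle inequality through $\pi_\CT p$, the discrete inf-sup bound \eqref{E:pstab} combined with \eqref{E:pdiffE} and \eqref{E:perr}, Cauchy--Schwarz with the $H^1$-stability of $E$, and Theorem~\ref{T:main} at the end. Your closing observation that \eqref{E:s-conserv} would give $\shps_2=1$ is also sound, and consistent with the paper's remark that its own proof deliberately avoids \eqref{E:s-conserv}.
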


Estimates of the pressure error in this form are well-established for all conforming stable pairs and not only for the divergence-free ones, see \cite[Theorem~II.2.1]{BreFor}. Correspondingly, our proof exploits \eqref{E:CRconsistency} and the $H^1$-stability of $E$, but not \eqref{E:s-conserv}. In contrast to similar results in \cite{CroRav} and \cite{Lin,LinMerNeiNeu18}, Theorem~\ref{T:presserr} does not assume additional regularity of the solution.  

%
%
\section{Practical aspects}\label{S:pract}


\subsection{Assembly of the modified discretization}
\label{SS:assembly}

Assume that $\{ \Vv_\CT^1, \dots, \Vv_\CT^N \}$ and $\{ \Vw_\CM^1,\dots, \Vw_\CM^M \}$ are bases of $CR(\CT)$ and $S^{d,0}(\CM)^d \cap H^1_0(\Om)^d$, respectively. Problem \eqref{E:CRmod} requires the computation of $\int_\Om \Vf \cdot E \Vv_\CT^i$ for $i=1, \dots, N$. Let $\mathbb{E} \in \rz^{N\times M}$ and $\overrightarrow{\Vf} \in \rz^M$ be defined as follows
\begin{equation*}
E \Vv_\CT^i = \sum_{j=1}^M \mathbb{E}_{ij} \Vw_\CM^j 
\qquad \text{and} \qquad 
(\overrightarrow{\Vf})_j = \int_\Om \Vf \cdot \Vw_\CM^j 
\end{equation*}
for $i=1,\dots, N$ and $j=1, \dots, M$. Few algebraic manipulations reveal
\begin{equation*}
\int_\Om \Vf \cdot E \Vv_\CT^i = (\mathbb{E} \overrightarrow{\Vf} )_i.
\end{equation*}
We aim at showing that both $\mathbb{E}$ and $\overrightarrow{\Vf}$ can be computed with $\goh(N)$ operations, if one uses standard nodal bases of $CR(\CT)$ and $S^{d,0}(\CM)^d \cap H^1_0(\Om)^d$. 

The nodal basis $\{ \vfi_F \Ve_1, \ldots, \vfi_F \Ve_d : F \in \CF_\Om \}$  of $CR(\CT)$ is known to be a convenient choice for assembling the left-hand side of \eqref{E:CR} and \eqref{E:CRmod}. Here  $\{ \Ve_1, \ldots, \Ve_d \}$ stands for the standard basis of $\rz^d$ and each $\vfi_F \in S^{1,-1}(\CT)$ is uniquely defined by the following properties: $\vfi_F$ is continuous at barycentres of interior faces and is normalized so that $\vfi_F(m_{F^\prime}) = \betrag[]{F}^{-1}\delta_{FF^\prime}$ for all $F^\prime \in \CF$. Note that the midpoint quadrature rule implies $\int_{F^\prime} \vfi_F = \delta_{FF^\prime}$ for all $F^\prime \in \CF$. Similarly, we consider the basis $\{ \vfi_z \Ve_1, \ldots, \vfi_z \Ve_d : z \in \CV_\Om^d(\CM) \}$ of $S^{d,0}(\CM)^d \cap H^1_0(\Om)^d$, where $\CV_\Om^d(\CM)$ denotes the interior Lagrange nodes of degree $d$ of $\CM$ and $\vfi_z$ is the nodal basis function of $S^{d,0}(\CM)$ associated with the evaluation at $z$. 
 
With the latter basis, the computation of $\overrightarrow{\Vf}$ requires $\goh(M) = \goh(N)$ operations. Moreover, each entry of $\mathbb{E}$ can be computed in $\goh(1)$ operations and the maximum number of nonzero entries in each row of $\mathbb{E}$ is bounded by a constant depending on the shape parameter of $\CT$. This entails that the cost for computing $\mathbb{E}$ is $\goh(N)$ operations. Since the left-hand side of \eqref{E:CR} is the same as in \eqref{E:CRmod}, we infer that the cost for assembling the modified discretization is the same as the one for assembling the standard one. 


To check our claim concerning the matrix $\mathbb{E}$, we first observe that each basis function $\vfi_F \Ve_k$, with $F \in \CF_\Om$ and $k \in \{ 1, \ldots, d \}$, is supported in the union $\omega_F$ of the two elements sharing $F$. According to \cite[\S3.2]{VeeZan17b}, the function $C(\vfi_F \Ve_k)$ is supported in the union of all elements touching $\omega_F$ and can be computed with $\goh(1)$ operations. Hence, to obtain $E(\vfi_F \Ve_k)$ from $C(\vfi_F \Ve_k)$, one has to solve the local Stokes problem \eqref{E:lSt} with load $r = \Div (C (\vfi_F \Ve_k)) -  \divT (\vfi_F \Ve_k)$ on each element $K \in \CT$ such that $K \cap \omega_F \neq \emptyset$. For this purpose, an efficient strategy is to precompute the solution of the local problem on a reference element  $\widehat{K}$ with load $\widehat{r}$, where $\widehat{r}$ varies in a basis $\{\widehat{r}_1, \ldots, \widehat{r}_m\}$ of $\pz_{d-1} \cap L^2_0(\widehat{K})$. Then, the solution of \eqref{E:lSt} can be simply obtained by means of the controvariant Piola's transformation, see \cite[\S III.1.3]{BreFor}. This confirms that $E(\vfi_F \Ve_k)$ is supported in $\{ K \in \CT : K \cap \om_F \neq \emptyset \}$ and can be computed from $\vfi_F \Ve_k$ with $\goh(1)$ operations. We conclude recalling the definition of $\mathbb{E}$ and our choice of the nodal basis of $S^{d,0}(\CM)^d \cap H^1_0(\Om)^d$. 

\subsection{Solution of the modified discretization} 
\label{SS:solution}
As for the standard Crouzeix-Raviart discretization \eqref{E:CR}, the computation of the velocity and pressure solving the modified problem \eqref{E:CRmod} can be decoupled, if $\Om$ is a simply connected two-dimensional domain. This procedure is essentially known in the literature. We show how to adapt it to the modified problem, for the sake of completeness. An extension to three dimensions is given in \cite{Hec}.

First recall that the discrete velocity $\Vu_\CT$ solving \eqref{E:CRmod} is in the subspace $V_\CT$, consisting of element-wise solenoidal Crouzeix-Raviart functions, cf. \eqref{E:setting-CR-mod}. For a simply connected two-dimensional domain, a basis of $V_\CT$ is given by the union of the sets $\{ \vfi_F \Vt_F : F \in \CF_\Om \}$ and $\{ \Vw_z : z \in \CV_\Om \}$, see \cite[Example VI.8.1, Figure~VI.35]{BreFor}. Here, $\Vt_F$ is a unit tangent vector to $F$ and $\Vw_z$ is a \emph{vortex} around $z$, defined by
\begin{equation*}
\Vw_z = \sum_{F_z \in \CF_z}  \vfi_{F_z} \Vn_{F_z}
\end{equation*}
where $\CF_z$ consists of all edges meeting at $z$ and $\Vn_{F_z}$ is a unit normal vector to $F_z$, oriented counterclockwise with respect to $z$. This basis can be used to compute $\Vu_\CT$ from problem \eqref{E:quasioptnonconf} with \eqref{E:setting-CR-mod}.

Next, let $\Vn_F$, $F \in \CF_\Om$, be a unit normal vector to $F$ and denote by $\jump[F]{\cdot}$ the jump across $F$ in direction $\Vn_F$. Assuming that $\Vu_\CT$ is known, we test the first equation of \eqref{E:CRmod} with $\Vv_\CT = \vfi_F \Vn_F$. Rearranging terms, the Gauss theorem yields
\begin{equation}\label{E:pRedEq}
- \jump[F]{p_\CT} = \int_{\om_F} \Vf \cdot E (\vfi_F \Vn_F) - \int_{\om_F} \gradT \Vu_\CT : \gradT (\vfi_F \Vn_F)
\end{equation}
for all $F \in \CF_\Om$. This set of conditions determines the discrete pressure $p_\CT$ uniquely, because the first equation of \eqref{E:CRmod} is automatically fulfilled for all test functions $\Vv_\CT = \vfi_F \Vt_F$ with $F \in \CF_\Om$. 

Let $K_0, K \in \CT$ be connected by a path of triangles $(K_i)_{i=1}^n$ such that $K_n = K$ and $F_i := K_i \cap K_{i-1}$, $i=1, \dots, n$, is an interior edge of $\CT$. Since $p_\CT$ is element-wise constant, we have
\begin{equation}\label{E:pJumps}
p_{\CT|K} - p_{\CT|K_0}
=
\sum_{i=1}^n \Vn_{F_{i}} \cdot \Vn_{K_i} \jump[{F_i}]{p_\CT}
\end{equation} 
where $\Vn_{K_i}$ is the outward unit normal vector of $K_i$. This identity has two interesting consequences. First, the sum $\sum_{i=1}^n \Vn_{F_{i}} \cdot \Vn_{K_i} \jump[{F_i}]{p_\CT}$ vanishes for $K = K_0$, meaning that the choice of the path connecting two triangles is irrelevant. Second, comparing with \eqref{E:pRedEq}, we see that the value of $p_\CT$ in any triangle $K \neq K_0$ depends only on the load $\Vf$, the discrete velocity $\Vu_\CT$ and the value of $p_\CT$ in $K_0$. Therefore, we can compute $p_\CT$ as follows. Defining $\overline{p}_\CT := p_\CT - p_{\CT|K_0}$, we have $\overline{p}_{\CT|K_0} = 0$ and, for all $K \neq K_0$, 
\begin{equation*}
\overline{p}_{\CT|K}
=
\sum_{i=1}^n
\Vn_{F_{i}} \cdot \Vn_{K_i}
\left( 
\int_{\om_{F_i}} \gradT \Vu_\CT : \gradT (\vfi_{F_i} \Vn_{F_i})
-
\int_{\om_{F_i}} \Vf \cdot E (\vfi_{F_i} \Vn_{F_i})
\right) 
\end{equation*}
where $(K_i)_{i=1}^n$ is any path connecting $K_0$ and $K$. The constraint $\int_\Om p_\CT = 0$ further implies $\sum_{K\in \CT} \betrag[]{K} \overline{p}_{\CT|K} = -\betrag[]{\Om} p_{\CT|K_0} $, showing that $p_\CT$ can be easily recovered from $\overline{p}_\CT$. The following algorithm implements this procedure in a way that requires a number of operations proportional to $\dim(S^{0,-1}(\CT))$.

\begin{algorithm}[htp]
\caption{Pressure Computation}\label{A:presscomp}
\begin{algorithmic}[1]
\algrenewcommand\algorithmicensure{\textbf{Provide}:}
 \Require $\Vu_\CT$ solution of \eqref{E:quasioptnonconf} with \eqref{E:setting-CR-mod}
 \Ensure $p = p_\CT$ pressure solving \eqref{E:CRmod}
  \State $p \gets 0$
  \State choose an element $K_0 \in \CT$
  \State $M \gets K_0$, $\CU \gets \CT \setminus \{ K_0 \}$
   \While{$\CU \ne \emptyset$}
    \State choose $K \in \CU$ so that $F := K \cap M$ is an interior edge of $\CT$
    \State $\displaystyle p_{|K} \gets p_{| \om_F \cap M} + \Vn_F \cdot \Vn_K \left( \int_{\om_F} \gradT \Vu_\CT : \gradT (\vfi_F \Vn_F) - \int_{\om_F} \Vf \cdot E (\vfi_F \Vn_F)\right) $ 
    \State $M \gets M \cup K$, $\CU \gets \CU \setminus \{ K \}$
  \EndWhile
  \State $\displaystyle p \gets p - \sum_{K \in \CT} \frac{\betrag{K}}{\betrag{\Om}} p_{|K}$.
\end{algorithmic}
\end{algorithm}

%
%
\section{Numerical experiments}\label{S:numer}

In this section we report and discuss the results of four numerical experiments, that are intended to illustrate and partially complement Theorems~\ref{T:main} and \ref{T:presserr}. In particular, we compare the modified Crouzeix-Raviart discretization \eqref{E:CRmod}, with $E$ as in \eqref{E:DivCorr}, and the standard one \eqref{E:CR}, whenever the latter is defined. If an exact solution is available, we compute also the best approximation $H^1$-error to the analytical velocity and the best approximation $L^2$-error to the analytical pressure,
\begin{equation*}\label{E:best-errors}
e_\CT(\Vu) := \inf_{\Vv_\CT \in CR(\CT)}\norm{\nabla \Vu - \gradT \Vv_\CT}
\qquad \text{and} \qquad
e_\CT(p) := \inf_{q_\CT \in S^{0,-1}(\CT)}\norm{p - q_\CT}.
\end{equation*}
As mentioned in the proof of Theorem~\ref{T:main}, the former is given by
\begin{equation*}
e_\CT(\Vu) = \norm{\nabla \Vu - \gradT (I_\CT \Vu)}
\end{equation*}
where $I_\CT$ is the Crouzeix-Raviart quasi-interpolation operator.

All experiments have been implemented in ALBERTA 3.0 \cite{HeiKoeKriSchSie,SchSie} and concern the two-dimensional Stokes equations \eqref{E:Stokes}, posed in the unit square, with unit viscosity, i.e.
\begin{equation*}\label{E:numerics-setting}
d=2, \qquad \qquad
\Om = (0,1) \times (0,1), \qquad \qquad
\nu = 1.
\end{equation*}

\subsection{Smooth solution} \label{SS:smooth-sol}

We first consider a test case with smooth solution, namely
\begin{equation*}\label{E:test1-sol}
\Vu(x_1, x_2) = \Curl\left(\:x_1^2(x_1-1)^2x_2^2(x_2-1)^2\:\right),
\qquad
p(x_1, x_2) = (x_1 - 0.5)(x_2 - 0.5)
\end{equation*}
where $\Curl(v) := (\partial_{x_2} v, -\partial_{x_1} v)$. We solve \eqref{E:CR} and \eqref{E:CRmod} on the following sequence $(\CT_n)_{n\geq 0}$ of uniform meshes. We divide $\Om$ into $2^n \times 2^n$ squares, with edges parallel to the lines $x_2 = 0$ and $x_1 = 0$ and edge length $2^{-n}$. Then, we obtain $\CT_n$ by drawing, for each square, the diagonal parallel to the line $x_1 = x_2$, see Figure~\ref{F:meshes-ex1-2}.   
Since $(\Vu, p) \in H^2(\Om)^2 \times H^1(\Om)$, both $e_{\CT_n}(\Vu)$ and $e_{\CT_n}(p)$ converge to zero with maximum decay rate $2^{-n} \approx (\#\CT_n)^{-0.5}$. 

To assess the quality of the standard and the modified Crouzeix-Raviart discretizations, we compute the ratios
\begin{equation}\label{E:error/best}
\gamma_n(\Vu) := \dfrac{\norm[L^2(\Om)]{\nabla \Vu - \nabla_{\CT_n} \Vu_{\CT_n}}}{e_{\CT_n}(\Vu)} 
\qquad \text{and} \qquad
\gamma_n(p) := \dfrac{\norm[L^2(\Om)]{p - p_{\CT_n}}}{e_{\CT_n}(p)} 
\end{equation}
where $(\Vu_{\CT_n}, p_{\CT_n}) \in CR(\CT_n) \times S^{0,-1}(\CT_n)$ denotes either the solution of \eqref{E:CR} or the one of \eqref{E:CRmod}. Some values of $\gamma_n(\Vu)$ and $\gamma_n(p)$ are displayed in the first column of Tables~\ref{F:velerr/best} and \ref{F:preerr/best}, respectively. They indicate that the velocity $H^1$-error of the modified discretization is larger than the one of the standard discretization by a factor between $1.3$ and $1.5$. Instead, the corresponding pressure $L^2$-errors are nearly of the same size for sufficiently large $n$ and the error of the modified discretization is smaller for the first values of $n$. More generally, one can expect that both discretizations perform similarly for smooth solutions of \eqref{E:Stokes}, on shape regular sequences of meshes.  

\begin{figure}
	\hfill
	\subfloat{\includegraphics[width=0.4\hsize]{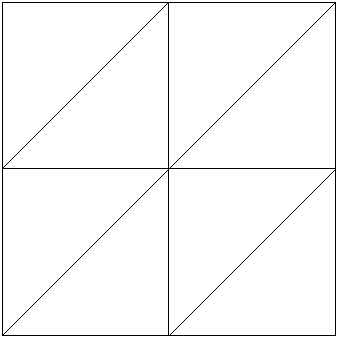}}
	\hfill
	\subfloat{\includegraphics[width=0.4\hsize]{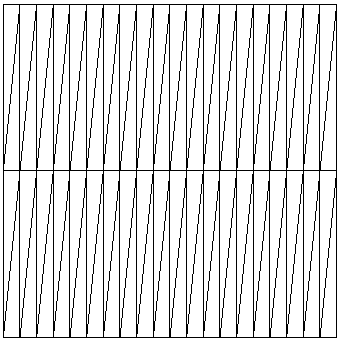}}
	\hfill
	\caption{Meshes $\CT_1$ from \S\ref{SS:smooth-sol} (left) and $\CT_1^{10}$ from \S\ref{SS:smooth-sol-anis} (right).}
	\label{F:meshes-ex1-2}
\end{figure}

\begin{table}
	\begin{tabular}{r|c|c|c|c}
		 &
		 &
		m = 10 &
		m = 20 &
		m = 40 \\
		n &
		\texttt{std}  \hspace{5pt} \texttt{mod} &
		\texttt{std}  \hspace{5pt} \texttt{mod} &
		\texttt{std}  \hspace{5pt} \texttt{mod} &
		\texttt{std}  \hspace{5pt} \texttt{mod} 
		\\[1ex]
		\hline
		&&&&
		\\[-1.5ex]
		2 & 1.37 \hspace{5pt} 2.07& 1.39 \hspace{5pt} 2.03 
		& 1.39 \hspace{5pt} 2.03 & 1.39 \hspace{5pt} 2.03
		\\
		3 & 1.48 \hspace{5pt} 2.06 & 1.50 \hspace{5pt} 2.04
		& 1.50 \hspace{5pt} 2.04 & 1.50 \hspace{5pt} 2.04
		\\
		4 & 1.54 \hspace{5pt} 2.05 & 1.55 \hspace{5pt} 2.05
		& 1.55 \hspace{5pt} 2.05 & 1.55 \hspace{5pt} 2.05
		\\
		5 & 1.57 \hspace{5pt} 2.05 & 1.57 \hspace{5pt} 2.05
		& 1.57 \hspace{5pt} 2.05 & 1.57 \hspace{5pt} 2.05
		\\
		6 & 1.58 \hspace{5pt} 2.05 & 1.58 \hspace{5pt} 2.05
		& 1.58 \hspace{5pt} 2.05 & 1.58 \hspace{5pt} 2.05 
	\end{tabular}
	\vspace{1ex}
	\caption{Ratios $\gamma_n(\Vu)$ from \S\ref{SS:smooth-sol} and $\gamma_n^m(\Vu)$, $m \in \{10, 20, 40\}$, from \S\ref{SS:smooth-sol-anis} for the standard (\texttt{std}) and the modified (\texttt{mod}) Crouzeix-Raviart discretizations.}
	\label{F:velerr/best}
	\begin{tabular}{r|c|c|c|c}
		&
		&
		m = 10 &
		m = 20 &
		m = 40 \\
		n &
		\texttt{std}  \hspace{5pt} \texttt{mod} &
		\texttt{std}  \hspace{5pt} \texttt{mod} &
		\texttt{std}  \hspace{5pt} \texttt{mod} &
		\texttt{std}  \hspace{5pt} \texttt{mod} 
		\\[1ex]
		\hline
		&&&&
		\\[-1.5ex]
		2 & 1.44 \hspace{5pt} 1.09 & 1.57 \hspace{5pt} 1.12 
		& 1.57 \hspace{5pt} 1.12 & 1.57 \hspace{5pt} 1.12
		\\
		3 & 1.41 \hspace{5pt} 1.10 & 1.41 \hspace{5pt} 1.11
		& 1.41 \hspace{5pt} 1.11 & 1.41 \hspace{5pt} 1.11
		\\
		4 & 1.25 \hspace{5pt} 1.07 & 1.22 \hspace{5pt} 1.07
		& 1.22 \hspace{5pt} 1.07 & 1.21 \hspace{5pt} 1.07
		\\
		5 & 1.14 \hspace{5pt} 1.06 & 1.11 \hspace{5pt} 1.06
		& 1.11 \hspace{5pt} 1.06 & 1.11 \hspace{5pt} 1.06
		\\
		6 & 1.08 \hspace{5pt} 1.06 & 1.09 \hspace{5pt} 1.07
		& 1.07 \hspace{5pt} 1.06 & 1.07 \hspace{5pt} 1.06		 
	\end{tabular}
	\vspace{1ex}
	\caption{Ratios $\gamma_n(p)$ from \S\ref{SS:smooth-sol} and $\gamma_n^m(p)$, $m \in \{10, 20, 40\}$, from \S\ref{SS:smooth-sol-anis} for the standard (\texttt{std}) and the modified (\texttt{mod}) Crouzeix-Raviart discretizations.}
	\label{F:preerr/best}
\end{table}

\subsection{Smooth solution and anisotropic meshes}\label{SS:smooth-sol-anis}

The purpose of this experiment is to compare the standard Crouzeix-Raviart discretization \eqref{E:CR} and the modified one \eqref{E:CRmod} on sequences of meshes with increasing shape parameter. To this end, we consider the same exact solution as in \S\ref{SS:smooth-sol} and the following sequence $(\CT_n^m)_{n\geq 0}$ of meshes with prescribed anisotropy $m \in \nz$. For any fixed $m$, we divide $\Om$ into $(m2^n) \times 2^n$ rectangles, with edges parallel to the lines $x_2 = 0$ and $x_1 = 0$. The length of the edges is $2^{-n}/m$ and $2^{-n}$, respectively. Then, we obtain $\CT_n^m$ by drawing, for each rectangle, the diagonal parallel to the line $x_2 = m x_1$, see Figure~\ref{F:meshes-ex1-2}. For $m=1$, this is nothing else than the mesh $\CT_n$ considered in the previous experiment. The diameter of all triangles in $\CT_n^m$ is proportional to $2^{-n}$.

Since the maximum angle in all meshes is $\pi/2$, we expect and observe that the velocity $H^1$-error and the pressure $L^2$-error  of the standard discretization (and, consequently, also $e_{\CT_n^m}(\Vu)$ and $e_{\CT_n^m}(p)$) converge to zero with the maximum decay rate $2^{-n} \approx (\#\CT_n^m/m)^{-0.5}$, irrespective of $m$, cf. \cite{AcoDur}. The same result cannot be inferred from the quasi-optimal estimates in Theorems~\ref{T:main} and \ref{T:presserr}, because the shape parameter of $\CT_n^m$ is proportional to $m$. Moreover, it does not appear possible to improve such estimates, because we numerically computed the best constant in \eqref{E:SK-stable} for this type of triangles and observed a linear dependence on $m$. (Recall that this constant enters into the bound of 
$\shps_1$ in Theorem~\ref{T:main}). 

Proceeding as before, we compute the ratios 
\begin{equation*}
\gamma_n^m(\Vu) := \dfrac{\norm[L^2(\Om)]{\nabla \Vu - \nabla_{\CT_n^m} \Vu_{\CT_n^m}}}{e_{\CT^m_n}(\Vu)} 
\qquad \text{and} \qquad
\gamma_n^m(p) := \dfrac{\norm[L^2(\Om)]{p - p_{\CT^m_n}}}{e_{\CT^m_n}(p)} 
\end{equation*}
both for the standard discretization and the modified one. Some values of $\gamma_n^m(\Vu)$ and $\gamma_n^m(p)$ are displayed in Tables~\ref{F:velerr/best} and \ref{F:preerr/best}, respectively, for $m \in \{10, 20, 40\}$. They indicate that, in this specific example, the performance of both discretizations for large $m$ remains close to the best possible and is similar to the one observed in \S\ref{SS:smooth-sol} for $m=1$.  
 
\subsection{Rough pressure}\label{SS:rough-pre}

This experiment aims at illustrating the pressure-robust\-ness of the modified discretization \eqref{E:CRmod}. Hence, we consider a test case with smooth analytical veclocity and rough analytical pressure, namely  
\begin{equation*}\label{E:test3-sol}
\Vu(x_1, x_2) = \Curl\left(\:x_1^2(x_1-1)^2x_2^2(x_2-1)^2\:\right),
\quad
p(x_1, x_2) = \begin{cases}
\frac{\pi}{\pi-1} & \text{if} \;\; x_1 > \pi^{-1} \\
-\pi & \text{if} \;\; x_1 < \pi^{-1}
\end{cases}.
\end{equation*}
We construct the initial triangulation $\widehat{\CT}_0$ by drawing the diagonals of $\Om$, see Figure~\ref{F:meshes-ex3-4}. The intersection of the diagonals is taken as newest vertex for all triangles in $\widehat{\CT}_0$. Each one of the successive meshes $\widehat{\CT}_n$, $n \geq 1$, is obtained from the previous one through two global refinements by newest vertex bisection.

Notice that the load $\Vf = -\Delta \Vu + \nabla p $ has a singular part concentrated on the line $l:= \{ \pi^{-1} \} \times (0,1)$. Thus, $\Vf$ does not belong to $L^2(\Om)^2$. Still, it is possible to extend the standard Crouzeix-Raviart discretization \eqref{E:CR} to this case because, for all $n\geq 0$, each edge of $\widehat{\CT}_n$ intersects $l$ in at most one point.   

Since $\Vu \in H^2(\Om)^2$, Theorem~\ref{T:main} and standard interpolation estimates entail that $e_{\widehat{T}_n}(\Vu)$ and the velocity $H^1$-error of \eqref{E:CRmod} converge to zero with the maximum decay rate $(\#\widehat{\CT}_n)^{-0.5}$. Moreover, our numerical results indicate that the ratio $\gamma_n(\Vu)$, defined as in \eqref{E:error/best} with $\widehat{\CT}_n$ in place of $\CT_n$, is nearly 2. In contrast, the data displayed in Figure~\ref{F:plot-ex3} show that the velocity $H^1$-error of \eqref{E:CR} is impaired by the low regularity of the analytical pressure and converges approximately with rate $(\#\widehat{\CT}_n)^{-0.25}$. The pressure $L^2$-errors of both discretizations are quite close to the best $L^2$-error $e_{\widehat{\CT}_n}(p)$ and converge approximately with decay rate $(\#\widehat{\CT}_n)^{-0.25}$.

\begin{figure}[htp]
	\hfill
	\subfloat{\includegraphics[width=0.4\hsize]{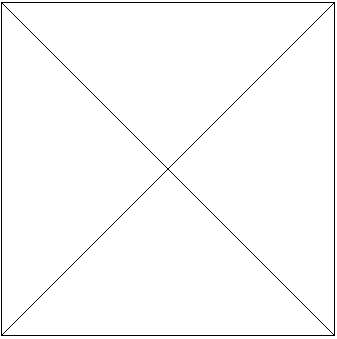}}
	\hfill
	\subfloat{\includegraphics[width=0.4\hsize]{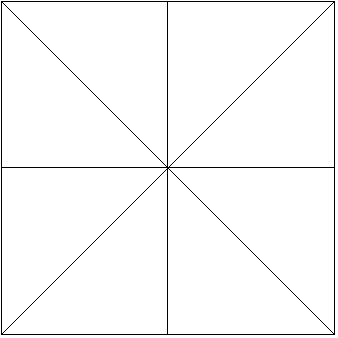}}
	\hfill
	\caption{Initial meshes $\widehat{\CT}_0$ from \S\ref{SS:rough-pre} (left) and $\breve{\CT}_0$ from \S\ref{SS:rough-load} (right).}
	\label{F:meshes-ex3-4}
\end{figure}

\begin{figure}[htp]
	{\includegraphics[width=0.6\hsize]{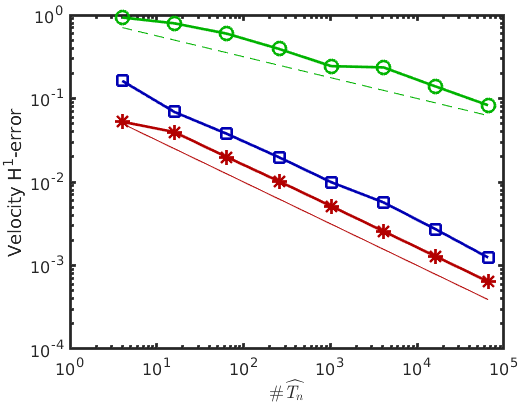}}
	\caption{Convergence histories of the $H^1$-errors of the standard ($\circ$) and the modified ($\square$) discretizations and of the best $H^1$-error to the analytical velocity ($*$) from \S\ref{SS:rough-pre}. Plain and dashed lines indicate decay rates $(\#\widehat{\CT}_n)^{-0.5}$ and $(\#\widehat{\CT}_n)^{-0.25}$, respectively.}
	\label{F:plot-ex3}
\end{figure}

\subsection{Rough load}\label{SS:rough-load}

Similarly as in the previous experiment, we now consider the Stokes equations \eqref{E:Stokes} with a load $\Vf \notin L^2(\Om)^2$. Indeed, we assume that the action of $\Vf$ on $H^1_0(\Om)^2$-functions is given by
\begin{equation*}\label{E:test4-load}
\left\langle \Vf, \Vv \right\rangle 
=
\int_0^1 x_2 \:\Vv(0.5, x_2) \cdot \Vt\: dx_2
\end{equation*}
where $\Vt = (0,1)$. The main difference from \S\ref{SS:rough-pre} is that here $\Vf$ is concentrated on the line $l = \{0.5\} \times (0,1)$ and its density is not constant (nor piecewise constant) in the $1$-dimensional Hausdorff measure.  

Let $\breve{\CT}_n$, $n\geq 0$, be obtained from the mesh $\widehat{\CT}_n$ in \S\ref{SS:rough-pre} through a global refinement by newest vertex bisection, see Figure~\ref{F:meshes-ex3-4}. Since some edges of $\breve{\CT}_n$ are contained in the support $l$ of $\Vf$, the standard Crouzeix-Raviart discretization $\eqref{E:CR}$ cannot be extended to this case. (The same observation applies also to the discretization proposed in \cite{Lin,LinMerNeiNeu18}.) In contrast, the modified discretization of \S\ref{S:mod} is well-defined, because $\Vf \in H^{-1}(\Om)^2$. Thus, we only solve \eqref{E:CRmod}. 

As the analytical solution is not available, we only monitor the difference between the approximations $( \Vu_{\breve{\CT}_{n-1}}, p_{\breve{\CT}_{n-1}})$ and $(\Vu_{\breve{\CT}_n}, p_{\breve{\CT}_n})$, obtained on $\breve{\CT}_{n-1}$ and $\breve{\CT}_n$, respectively. Hence, we compute
\begin{equation*}
\delta_n(\Vu) := \norm[L^2(\Om)]{\nabla_{\breve{\CT}_n} \Vu_{\breve{\CT}_n} - \nabla_{\breve{\CT}_{n-1}} \Vu_{\breve{\CT}_{n-1}} }
\qquad \text{and} \qquad
\delta_n(p) := \norm[L^2(\Om)]{p_{\breve{\CT}_n} - p_{\breve{\CT}_{n-1}}}
\end{equation*}
for $n\geq 1$. We estimate the decay rate of $\delta_n(\cdot)$ in terms of $\#\breve{\CT}_n$ through the so-called experimental order of convergence
\begin{equation*}\label{E:EOC}
\mathrm{EOC}_n(\cdot) := \dfrac{\log\left(\delta_n(\cdot)  \big/ \delta_{n-1}(\cdot)\right)}{\log\left( (\#\breve{\CT}_n) \big/ (\#\breve{\CT}_{n-1})\right)}
\end{equation*} 
for $n \geq 2$. Some values of $\delta_n(\Vu)$ and $\delta_n(p)$, together with the corresponding EOCs, are displayed in Table~\ref{F:rough-load}. 

The load $\Vf$ is in $H^{-1/2-\varepsilon}(\Om)$ for all $\varepsilon > 0$ and this implies that the corresponding analytical solution is in $H^{3/2-\varepsilon}(\Om) \times H^{1/2-\varepsilon}(\Om)$, owing to the shift theorem of \cite{Dau}. Thus, one may expect that $\mathrm{EOC}_n(\Vu)$ and $\mathrm{EOC}_n(p)$ are nearly $0.25$. The values in Table~\ref{F:rough-load} actually indicate a higher decay rate, both of $\delta_n(\Vu)$ and $\delta_n(p)$. A possible explanation is that, focusing for instance on the best $H^1$-error to the analytical velocity $\Vu$, one has, for all $n \geq 0$,
\begin{equation*}
e_{\breve{\CT}_n}(\Vu)^2 =
\inf_{\Vv_{\breve{\CT}_n} \in CR(\breve{\CT}_n)} 
\norm[L^2(\Om_{-})]{\nabla \Vu - \nabla_{\breve{\CT}_n} \Vv_{\breve{\CT}_n}}^2
+ \inf_{\Vv_{\breve{\CT}_n} \in CR(\breve{\CT}_n)} 
\norm[L^2(\Om_{+})]{\nabla \Vu - \nabla_{\breve{\CT}_n} \Vv_{\breve{\CT}_n}}^2
\end{equation*} 
where $\Om_{-} = (0, 0.5)\times (0,1)$ and $\Om_{+} = (0.5, 1) \times(0,1)$, cf. \cite[eq. (1.2)]{VeeZan17b}. Therefore, according to Theorem~\ref{T:main}, the modified discretization \eqref{E:CRmod} is potentially able to exploit additional regularity of $\Vu_{|\Om_{-}}$ and $\Vu_{|\Om_{+}}$ beyond the one of $\Vu$. 

\begin{table}
	\begin{tabular}{r|r|cc|cc}
		n & $\#\breve{\CT}_n$ &
		$\delta_n(\Vu)$  & $\mathrm{EOC}_n(\Vu)$ &
		$\delta_n(p)$  & $\mathrm{EOC}_n(p)$
		\\[1ex]
		\hline
		&&&
		\\[-1.5ex]
		3 &  512 &
		6.092e-02 &  & 
		4.339e-02 & 
		\\
		4 & 2048 &
		3.673e-02 & \raisebox{1.5ex}[0pt]{0.37} & 
		2.571e-02 & \raisebox{1.5ex}[0pt]{0.38}
		\\
		5 & 8192 &
		2.135e-02 & \raisebox{1.5ex}[0pt]{0.39} & 
		1.455e-02 & \raisebox{1.5ex}[0pt]{0.41}
		\\
		6 & 32768 &
		1.206e-02 & \raisebox{1.5ex}[0pt]{0.41} & 
		8.021e-03 & \raisebox{1.5ex}[0pt]{0.43}
		\\
		7 & 131072 &
		6.670e-03 & \raisebox{1.5ex}[0pt]{0.43} & 
		4.349e-03 & \raisebox{1.5ex}[0pt]{0.44}
	\end{tabular}
	\vspace{1ex}
	\caption{Values $\delta_n(\Vu)$ and $\delta_n(p)$ from \S\ref{SS:rough-load} and corresponding EOCs.}
	\label{F:rough-load}
\end{table}
%
%
\subsection*{Acknowledgements}
We thank Andreas Veeser for many inspiring discussions and the unknown reviewers for their comments which led to substantial improvements of the present article.
%
%
%
\providecommand{\bysame}{\leavevmode\hbox to3em{\hrulefill}\thinspace}
\providecommand{\MR}{\relax\ifhmode\unskip\space\fi MR }
\providecommand{\MRhref}[2]{%
  \href{http://www.ams.org/mathscinet-getitem?mr=#1}{#2}
}
\providecommand{\href}[2]{#2}

\end{document}